% LaTeX file preamble
\documentclass[12pt,reqno]{amsart}

% Package inclusion
\usepackage{tikz}
\usetikzlibrary{positioning, arrows.meta}
\usepackage{epsfig}
\usepackage{amscd}
\usepackage[mathscr]{eucal}
\usepackage{amssymb}
\usepackage{amsxtra}
\usepackage{amsmath}
\usepackage[all]{xy}
\usepackage{mathtools}
\usepackage[pdfencoding=auto]{hyperref}
\usepackage{bookmark}
\usepackage{caption}
\usepackage{colortbl}
\usepackage{arydshln}

\DeclarePairedDelimiter\abs{\lvert}{\rvert}

% Theorem Styles and Definitions
\theoremstyle{plain}
 % Main theorem counter
\newtheorem{thm}{Theorem}[section] % Theorem counter within section
 % Shares counter with thm
\newtheorem{cor}[thm]{Corollary} % Shares counter with thm
\newtheorem{prop}[thm]{Proposition} % Corrected: Shares counter with thm

% Unnumbered conjecture environment definition
\newtheorem*{conj}{Conjecture}

\theoremstyle{definition}
 % Shares counter with thm

\theoremstyle{remark}
 % Shares counter with thm
 % Unnumbered remark

% Page Layout Adjustments

\oddsidemargin 0pt
\evensidemargin 0pt
\marginparwidth 40pt
\marginparsep 10pt
\topmargin 0pt
\headsep 15pt
\textheight 8.5in
\textwidth 6.3in
\headheight12pt

%----------------------------------------------------------------------
% Local Definitions.
%

%----------------------------------------------------------------------
%
% Beginning of the document
\begin{document}

\bigskip

\title[Approx. of Hardy $Z$-fun. via high-order sections]{On the approximation of the Hardy $Z$-function via high-order sections} 
\date{\today}

\author{Yochay Jerby}

\address{Yochay Jerby, Faculty of Sciences, Holon Institute of Technology, Holon, 5810201, Israel}
\email{yochayj@hit.ac.il}

%\bibliographystyle{plain}

%----------------------------------------------------------------------
%
% Abstract
%
%\begin{abstract}
% ...
%\end{abstract}
\begin{abstract}
Sections of the Hardy $Z$-function are given by $Z_N(t) := \sum_{k=1}^{N} \frac{cos(\theta(t)-ln(k) t) }{\sqrt{k}}$ for any $N \in \mathbb{N}$. Sections approximate the Hardy $Z$-function in two ways: (a) $2Z_{\widetilde{N}(t)}(t)$ is the Hardy-Littlewood approximate functional equation (AFE) approximation for $\widetilde{N}(t) = \left [ \sqrt{\frac{t}{2 \pi}} \right ]$. (b) $Z_{N(t)}(t)$ is Spira's approximation for $N(t) = \left [\frac{t}{2} \right ]$. Spira conjectured, based on experimental observations, that, contrary to the classical approximation $(a)$, approximation (b) satisfies the Riemann Hypothesis (RH) in the sense that all of its zeros are real. We present theoretical justification for Spira's conjecture, via new techniques of acceleration of series, showing that it is essentially equivalent to RH itself.
 
\end{abstract}

\maketitle
%----------------------------------------------------------------------
%
% Beginning of text
%

\section{Introduction}

\subsection{Riemann's Analytic Extension of Zeta} The Riemann zeta function is given by $\zeta(s) := \sum_{n=1}^{\infty} n^{-s}$ in the range $Re(s)>1$. In his revolutionary 1859 work, Riemann extended the zeta function $\zeta(s)$ analytically to a meromorphic function on the entire complex plane with  a single pole at $s=1$. This extension allowed him to explore the zeta function from a complex analytic perspective, uncovering its profound connection to the distribution of prime numbers. Riemann's analytic continuation of $\zeta(s)$ is given by the integral representation:

\begin{equation}
\label{eq:int}
\zeta(s) = \frac{\Gamma(1-s)}{2 \pi i} \int_{- \infty}^{\infty} \frac{(-x)^s}{e^x-1} \frac{dx}{x}.
\end{equation}

Although the integral representation outlined in equation \eqref{eq:int} was of tremendous importance for Riemman's theoretical explorations in his manuscript, it is not amenable for direct computations.  To practically compute $\zeta(s)$, asymptotic techniques are required.

\subsection{The Approximate Functional Equation (AFE) and the Riemann-Siegel Formula} The development of the AFE for the $Z$-function due to Hardy and Littlewood, marks a cornerstone in computation methods of zeta \cite{HL,HL2,HL3,HL4}. The Hardy $Z$-function, denoted as \(Z(t)\), is the real function defined by
\begin{equation} \label{eq:Hardy}
Z(t) = e^{i \theta(t)} \zeta \left ( \frac{1}{2} +it \right )
\end{equation} 
where \(\theta(t)\) is the Riemann-Siegel \(\theta\)-function, given by the equation
\begin{equation}
\theta(t) = \text{arg} \left ( \Gamma \left ( \frac{1}{4} + \frac{i t}{2} \right ) \right ) -\frac{t}{2} \log(t),
\end{equation} 
see \cite{E,I}. The Hardy-Littlewood approximation is encapsulated in the formula
\begin{equation} 
\label{eq:HL} 
Z(t) =  2 \sum_{k=1}^{\widetilde{N}(t)} \frac{cos(\theta(t)-ln(k) t) }{\sqrt{k}}+R(t),
\end{equation}  
where $\widetilde{N}(t) = \left [ \sqrt{ \frac{ t}{2 \pi}} \right ]$ and the error term is given by 
\begin{equation} 
R(t)=O \left ( \frac{1}{\sqrt[4]{t}}  \right ).
\end{equation}
This representation, while powerful, for many purposes, requires further refinement of the error term $R(t)$ to enhance the level of precision.

 In the 1930's, C. L. Siegel uncovered previously unpublished notes by Riemann which revealed that, remarkably, Riemann not only knew the Hardy-Littlewood formula \eqref{eq:HL} but also developed complex saddle point techniques which enable the required further evaluation of its error term $R(t)$ for any order \cite{E,Si}. For instance, expanded to first-order, the Riemann-Siegel formula gives 
\begin{equation} 
R(t) = (-1)^{N(t)-1} \left ( \frac{t}{2 \pi} \right )^{-\frac{1}{4}}  \frac{cos ( 2 \pi (p^2-p -\frac{1}{16})) }{cos (2 \pi p ) }+ O \left ( \frac{1}{t^{\frac{3}{4}}} \right ),
\end{equation}
where $p= \sqrt{\frac{t}{2 \pi}} - \widetilde{N}(t)$. As an asymptotic formula, for a given specific value of $t$, indefinitely increasing the order of the expansion does not guarantee improved approximation. It is conjectured by Berry and Keating \cite{BeKe} that, when expanded to its optimal order, the Riemann-Siegel formula can reach accuracy level of exponentially decaying error $O \left ( e^{-\pi t} \right )$.

Since its introduction, the Riemann-Siegel formula, especially when enhanced with the Odlyzko-Schönhage algorithm \cite{OdlyzkoSchoenhage1988},  has been the main method for numerical verification of the RH, see \cite{Gourdon2004,Le,Od,Odlyzko1992,Ro,Titchmarsh1935,Turing1953}. The most comprehensive verification to date, achieved by Platt and Trudigian, confirms the RH up to $3\cdot 10^{12}$, see \cite{PT}. However, it can be argued that one of the great challenges in the Riemann hypothesis is the fact that while the Riemann-Siegel formula gives efficient methods for the numerical estimation of $R(t)$ to any given order, it offers little analytical insight on its structure, crucial for the understanding of the properties of the zeros in general, especially as the order of its expansion increases due to the complexity of the expressions involved.

\subsection{Sections of the $Z$-function and Spira's Approximation} In \cite{SP2, SP1} Spira introduced the notion of sections of the AFE of $Z(t)$ which are defined by 
\begin{equation} 
\label{eq:sections} 
Z_N(t) := \sum_{k=1}^{N} \frac{cos(\theta(t)-ln(k) t) }{\sqrt{k}},  
\end{equation}  
and conducted a study of their zeros for any $N \in \mathbb{N}$. In particular, Spira noted that sections of $Z(t)$ give an additional approximation of the function to that of the AFE 
\begin{equation} 
\label{eq:Spira}
Z(t) = Z_{N(t)} (t) + O \left ( \frac{1}{\sqrt[4]{t}} \right ),  
\end{equation}  
at the higher range $N(t) = \left [ \frac{t}{2} \right ]$. This approximation and its proof most certainly pre-dates Spira and was probably already known to Riemann, see Theorem 1.8 of \cite{I2} for a variant. A particularly noteworthy aspect of this additional approximation \eqref{eq:Spira}, however, is its unique properties when compared to the classical AFE, discovered by Spira in his experimental investigations. For instance, Fig. \ref{fig:f1.0} shows the values of the sections $Z_N(t)$ for the fixed value of $t=3000$ and $N=1,...,1500$ (blue), the Hardy-Littlewood approximation of $\frac{1}{2}Z(3000)$ (green) and Spira approximation $Z(3000)$ (brown):  
\begin{figure}[ht!]
\centering
\includegraphics[scale=0.325]{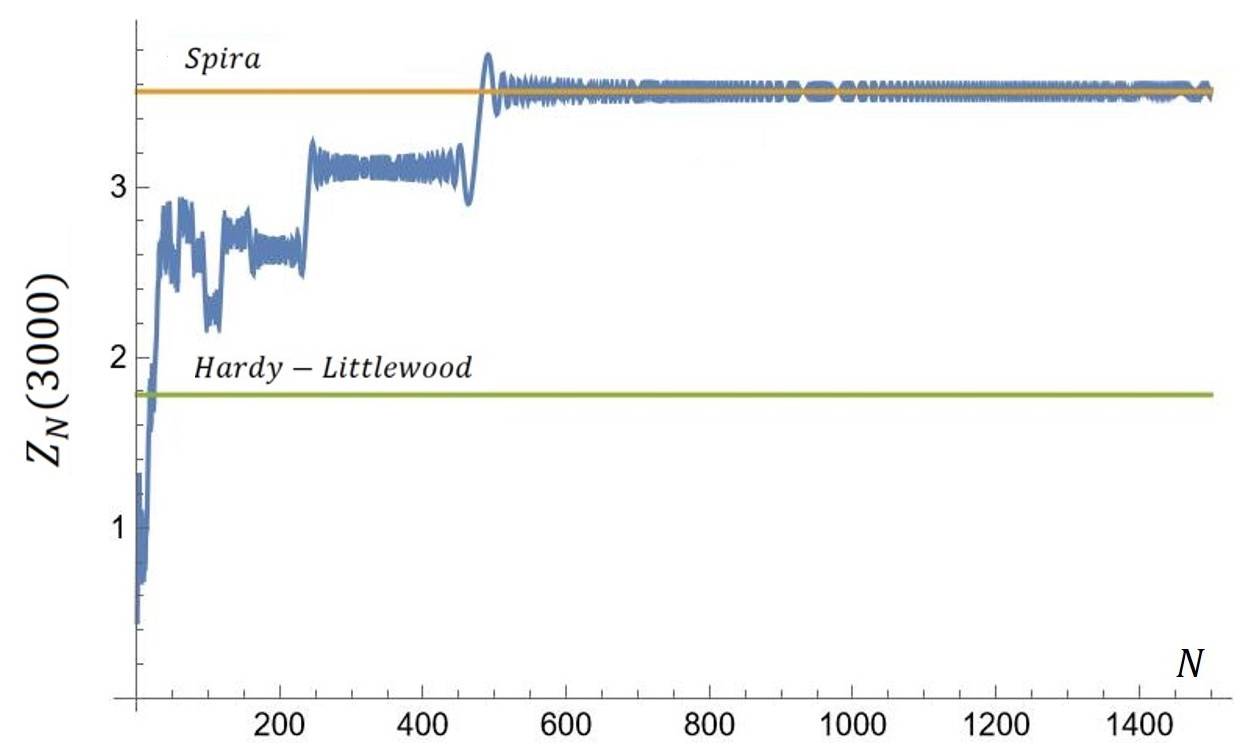}
\caption{\small Values of the sections $Z_N(t)$ for the fixed value of $t=3000$ and $N=1,...,1500$ (blue), the Hardy-Littlewood approximation of $\frac{1}{2}Z(3000)$ (green) and the Spira approximation $Z(3000)$ (brown).}
\label{fig:f1.0}
\end{figure}

For numerical computations, employing the Hardy-Littlewood main sum coupled with the Riemann-Siegel formula for evaluating $R(t)$ clearly seems significantly more efficient than utilizing Spira's approximation. For instance, for the following reasons: 

\begin{enumerate}
\item The number of terms required for Spira's approximation, $N(t) = \left [ \frac{t}{2} \right ]$, increases quadratically compared to $\widetilde{N}(t) = \left [ \sqrt{\frac{t}{2\pi}} \right ]$ required by the Hardy-Littlewood method, making it far more costly for numerical calculations. This characteristic alone likely made the approximation \eqref{eq:Spira}, even if folklorically known, impractical for computational use, particularly before the advent of computers. 

\item The theoretical asymptotic estimation of Spira's approximation error, obtained by classical means, is $O\left(\frac{1}{\sqrt[4]{t}}\right)$, which similar to that of the Hardy-Littlewood formula, before the application of the Riemann-Siegel expansion of the error whose first term already gives an error of order $O \left(t^{-\frac{3}{4}} \right )$. It should be noted, however, that these are approximate results, and hence this seemingly superior asymptotic bound does not necessarily ensure greater practical accuracy over Spira's approximation.

\end{enumerate}

\subsection{Spira's conjecture and the Absence of Theoretical Justification} Despite being far more costly, Spira, through his empirical investigations, discovered that approximation \eqref{eq:Spira} might have a critical significance, see S.6 of \cite{SP2}. Remarkably, contrary to the Hardy-Littlewood formula, this higher-range approximation does not seem to admit zeros off the real line. That is, it is sensitive enough to observe the RH without the need for the expansion of the error term, as in the classical Riemann-Siegel formula. This observation, although not explicitly stated by Spira in the following form, can be formalized as follows:  

\begin{conj}[Spira's RH for sections] All the non-trivial zeros of $Z_{N(t)}(t) $ are real. 
\end{conj}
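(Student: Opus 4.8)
The plan is to upgrade the weak bound in \eqref{eq:Spira} into an exact identity whose error term is explicit and genuinely small, and then to read off the location of the zeros from it. The first step is to record that each summand is a real part,
\begin{equation*}
\frac{\cos(\theta(t)-\ln(k)t)}{\sqrt{k}}=\operatorname{Re}\big(e^{i\theta(t)}k^{-\frac12-it}\big),
\end{equation*}
so that $Z_N(t)=\operatorname{Re}\big(e^{i\theta(t)}\zeta_N(\tfrac12+it)\big)$ with $\zeta_N(s):=\sum_{k=1}^N k^{-s}$. Extending to complex argument through the symmetric combination
\begin{equation*}
Z_N(\tau)=\tfrac12\Big(e^{i\theta(\tau)}\zeta_N(\tfrac12+i\tau)+e^{-i\theta(\tau)}\zeta_N(\tfrac12-i\tau)\Big),
\end{equation*}
which uses the meromorphic continuation of $e^{\pm i\theta}$ and agrees with the usual extension of $Z$ itself, makes $Z_N$ analytic and satisfies $Z_N(\bar\tau)=\overline{Z_N(\tau)}$; its zeros therefore occur in complex-conjugate pairs, so Spira's conjecture is precisely the statement that the relevant zeros (those with $\operatorname{Re}\tau\approx 2N$) are self-conjugate.

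The core of the argument is an acceleration of the tail $R_N(s):=\zeta(s)-\zeta_N(s)=\sum_{k>N}k^{-s}$ entering
\begin{equation*}
Z(\tau)-Z_N(\tau)=\tfrac12\Big(e^{i\theta(\tau)}R_N(\tfrac12+i\tau)+e^{-i\theta(\tau)}R_N(\tfrac12-i\tau)\Big).
\end{equation*}
On the Spira diagonal $N=N(t)=[t/2]$ the summand $k^{-s}$ changes phase by a bounded amount per step, so the tail is oscillatory rather than slowly varying; the plan is to apply the paper's acceleration (summation by parts and resummation of this oscillatory tail) to isolate an explicit main term $\tfrac{N^{1-s}}{s-1}+\cdots$ and to resum the remaining expansion into a slowly varying amplitude $A(t)$ with
\begin{equation*}
Z(t)-Z_{N(t)}(t)=\operatorname{Re}\big(e^{i\theta(t)}A(t)\big),
\end{equation*}
which one then identifies against the first Riemann--Siegel correction. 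This identification is the concrete meaning of the asserted near-equivalence with RH: the high-order section already carries the leading Riemann--Siegel term, so that the crude $O(t^{-1/4})$ of \eqref{eq:Spira} is replaced by an explicit piece plus a strictly smaller, controllable remainder.

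With such a structured error in hand, reality of the zeros is transferred by a Rouch\'e comparison between $Z_{N(t)}$ and $Z$. For fixed $N$ one works on the strip $2N\le\operatorname{Re}\tau<2(N+1)$, partitions it by the real zeros of $Z$, and applies Rouch\'e on thin rectangles straddling the real axis: whenever $|Z-Z_N|<|Z|$ along the boundary of such a box, $Z_N$ and $Z$ have equally many zeros inside. If that count is one, the symmetry $Z_N(\bar\tau)=\overline{Z_N(\tau)}$ forces the single zero to be self-conjugate, hence real; under RH every box carries exactly one simple zero and the conjecture follows. Run in the opposite direction, a zero of $\zeta$ off the critical line would be a genuinely complex zero of $Z$, which the same comparison reproduces as a complex zero of $Z_{N(t)}$, giving the converse implication and hence the full equivalence with RH.

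The principal obstacle is making the Rouch\'e comparison uniform and unconditional. Two things must be secured at once. First, the acceleration must be pushed far enough that the correction $Z-Z_N$ is provably smaller than $|Z|$ over each box boundary; this is delicate exactly in the transition regions, where neighbouring zeros of $Z$ are unusually close and the amplitude of $Z$ against which one competes is itself small --- it is here that the problem becomes genuinely as hard as RH. Second, the diagonal map $t\mapsto N(t)=[t/2]$ is only piecewise constant, so $Z_{N(t)}(t)$ is not analytic in $t$; I intend to avoid this by running the argument principle separately on each interval of constancy of $N$, where $Z_N$ is analytic, and then checking that no zero is created or destroyed as $N$ jumps across the endpoints. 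Controlling the error uniformly in the close-zero regime, rather than the acceleration itself, is where I expect the real difficulty to lie.
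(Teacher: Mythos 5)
Before anything else, be aware that the statement you are trying to prove is presented in the paper as a \emph{conjecture}: the paper offers no proof of it. What the paper actually proves (Proposition \ref{prop:acc} and Corollary \ref{cor:2}) is that the Euler-type accelerated section $\widetilde{Z}_{N(t)}(t)$ equals $Z(t)$ up to an $O(e^{-\omega t})$ error, and that the step coefficients $\alpha^{step}_k(t)$ of Spira's section converge in $\ell_2$ to the accelerated coefficients $\widetilde{\alpha}^{acc}_k(t)$ as $t\to\infty$; from this it draws only the informal conclusion that Spira's RH for sections is ``essentially equivalent'' to RH. Your route --- resumming the tail $\zeta(s)-\zeta_N(s)$ on the diagonal $N=[t/2]$ and then transferring reality of zeros by a Rouch\'e comparison on thin boxes --- is genuinely different from the paper's coefficient-smoothing argument and is in principle the more quantitative one; but as written it does not close the gap either, and you candidly say as much yourself.

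Two concrete problems remain. First, the identity $R_N(s)=\sum_{k>N}k^{-s}$ is false where you need it: that series diverges for $\operatorname{Re}(s)=\tfrac12$, so $R_N$ must be defined as the difference $\zeta(s)-\zeta_N(s)$ and expanded by Euler--Maclaurin (which is where your main term $\frac{N^{1-s}}{s-1}+\cdots$ really comes from); the ``oscillatory tail'' you propose to sum by parts does not exist as a convergent object on the critical line. Second, the Rouch\'e step is not merely ``delicate'': to obtain $\lvert Z-Z_N\rvert<\lvert Z\rvert$ on the boundary of a thin rectangle straddling a real zero you need a \emph{lower} bound on $\lvert Z\rvert$ there, and no such bound is available --- not unconditionally, and not under RH either, since RH gives neither simplicity of the zeros nor any quantitative separation of consecutive zeros. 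Consequently even the conditional direction (``under RH the conjecture follows''), let alone the claimed full equivalence, is not established by your sketch. The honest conclusion, which matches the paper's own stance, is that both your argument and the paper's yield a heuristic near-equivalence with RH rather than a proof of the conjecture.
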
 

Figure \ref{fig:f1.1} shows $ln \abs{Z(t)}$ (orange) and comparing between the Spira approximation $ln \abs{Z_N(t)}$ with $N=N(t)=205$ (blue - left) and the classical Hardy-Littlewood approximation $ln \abs{2 Z_N(t)}$ with $N=\widetilde{N}(t)=8$ (blue - right) in the range $412<t<419$:  

\begin{figure}[ht!]
\centering
\includegraphics[scale=0.35]{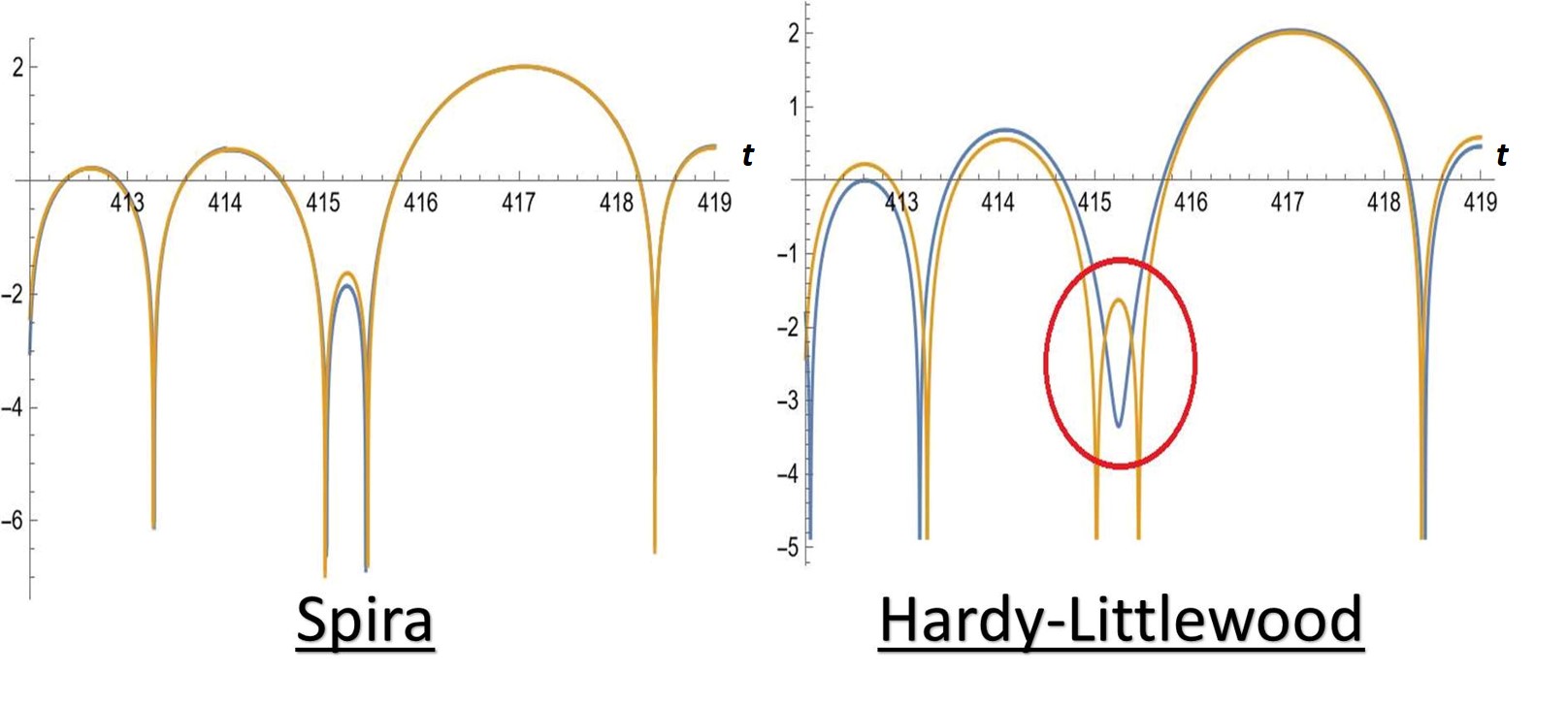}
\caption{\small Graphs of $ln \abs{Z(t)}$ (orange) and the Spira approximation $ln \abs{Z_N(t)}$ with $N=N(t)=205$ (blue - left) and the classical Hardy-Littlewood approximation $ln \abs{2 Z_N(t)}$ with $N=\widetilde{N}(t)=8$ (blue - right) in the range $412<t<419$}
\label{fig:f1.1}
\end{figure}

Figure \ref{fig:f1.1} illustrates an instance of two real zeros accurately predicted by Spira's formula, as anticipated, but missed by the Hardy-Littlewood formula (highlighted here in red), which actually erroneously identifies them as complex zeros

Spira's conjecture, while grounded in empirical and experimental observations, lacks direct theoretical underpinnings. Our aim in this work is to present a theoretical justification for the phenomena observed in Spira's conjecture of the advantage of the higher order section $Z_{N(t)}(t) $ over the Hardy-Littlewood sections $Z_{\widetilde{N}(t)}(t)$, via our new techniques of accelerated approximations.

\section{Spira's Approximation and Accelerated Approximations} 

 In \cite{J} we have developed an AFE based on the accelerated global series for $\zeta(s)$ due to Hasse-Sondow, which admits an error of exponential decay. For the $Z$-function we have the following variant of our accelerated formula:
\begin{equation} 
\label{eq:acc}
Z(t) =\widetilde{Z}_{N(t)}(t)+ O \left ( e^{- \omega t} \right ), 
\end{equation} 
where
\begin{equation} 
\widetilde{Z}_N(t):= \sum_{n=0}^{N} \frac{1}{2^{n+1}} \sum_{k=0}^n \binom{n}{k} \frac{cos( \theta(t)- ln(k+1) t)}{\sqrt{k+1}} ,
\end{equation} 
with $\omega>0$ a certain positive constant computed in \cite{J} and $N(t)= \left [\frac{t}{2} \right ]$ is the same order as in Spira's approximation. The following Fig. \ref{fig:f1.2} shows $ln \abs{Z(t)}$ (orange) and our approximation $ln \abs{ \widetilde{Z}_{N(t)}(t)}$ with $N=N(t)=205$ (blue) in the range $412<t<419$.:

\begin{figure}[ht!]
\centering
\includegraphics[scale=0.45]{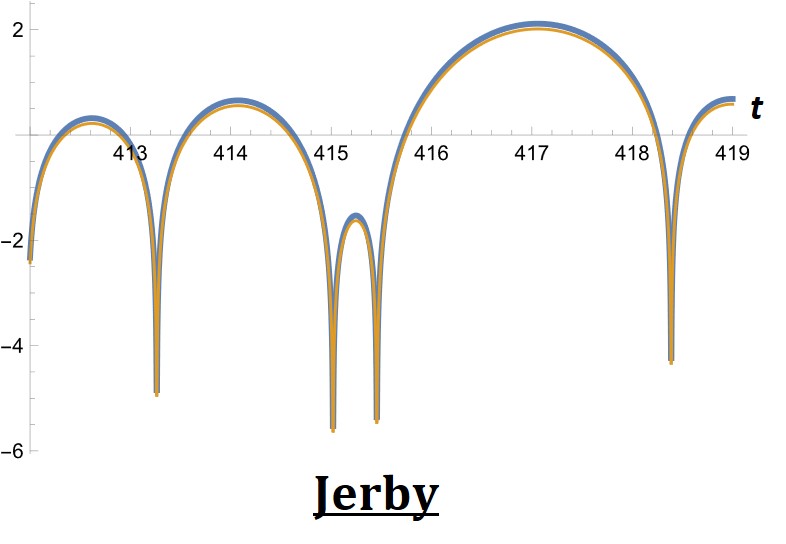}
\caption{\small Graphs of $ln \abs{Z(t)}$ (orange) and our accelerated approximation $ln \abs{ \widetilde{Z}_{N(t)}(t)}$ of \cite{J} with $N=N(t)=205$ (blue) in the range $412<t<419$.}
\label{fig:f1.2}
\end{figure}

Note that our approximation via the accelerated $\widetilde{Z}_{N(t)}(t)$ achieves the superior accuracy expected to be attained by the Hardy-Littlewood AFE coupled with the Riemann-Siegel formula when evaluated at the optimal order, according to the Berry and Keating conjecture. Our main result is the following reformulation: 
\begin{prop} \label{prop:acc} The following holds 
\begin{equation}
\label{eq:acc-re} 
Z(t) = \widetilde{Z}_{N(t)}(t) + O \left ( e^{- \omega t} \right ),
\end{equation}
where 
\begin{equation} 
\widetilde{Z}_{N(t)}(t) = \sum_{k=1}^{N(t)} \widetilde{\alpha}^{acc}_k(t) \frac{\cos( \theta(t)- \ln(k) t)}{\sqrt{k}}
\end{equation} 
is the accelerated $N$-th section with the coefficients $\alpha^{acc}_k(t)$ given by
\begin{equation}
\label{eq:acc-co}
\widetilde{\alpha}^{acc}_k(t) := \sum_{n=k-1}^{N(t)} \frac{1}{2^{n+1}} \binom{n}{k-1}.
\end{equation}
\end{prop}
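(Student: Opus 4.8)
The plan is to note first that assertion \eqref{eq:acc-re} is merely a restatement of the accelerated approximation \eqref{eq:acc} established in \cite{J}; thus the genuine content of the proposition is the purely algebraic identity that rewrites the double sum defining $\widetilde{Z}_N(t)$ as a single weighted section with the coefficients \eqref{eq:acc-co}. I would therefore begin from
\begin{equation*}
\widetilde{Z}_N(t) = \sum_{n=0}^{N} \frac{1}{2^{n+1}} \sum_{k=0}^{n} \binom{n}{k} \frac{\cos(\theta(t) - \ln(k+1) t)}{\sqrt{k+1}}
\end{equation*}
and reindex the inner sum by setting $j = k+1$, so that $j$ ranges over $1 \le j \le n+1$ and the summand becomes $\binom{n}{j-1} \frac{\cos(\theta(t) - \ln(j) t)}{\sqrt{j}}$.

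Next I would interchange the order of the two summations. As both sums are finite this is a mere rearrangement, with no convergence issue: the constraints $0 \le n \le N$ and $1 \le j \le n+1$ are together equivalent to $1 \le j \le N+1$ and $j-1 \le n \le N$. Pulling out the common factor $\frac{\cos(\theta(t) - \ln(j) t)}{\sqrt{j}}$ for each fixed $j$ then gives
\begin{equation*}
\widetilde{Z}_N(t) = \sum_{j=1}^{N+1} \left( \sum_{n=j-1}^{N} \frac{1}{2^{n+1}} \binom{n}{j-1} \right) \frac{\cos(\theta(t) - \ln(j) t)}{\sqrt{j}} = \sum_{j=1}^{N+1} \widetilde{\alpha}^{acc}_j(t) \, \frac{\cos(\theta(t) - \ln(j) t)}{\sqrt{j}},
\end{equation*}
which is exactly the claimed single-section form, save that the summation naturally runs up to $j = N+1$ rather than $j = N$.

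The one step I expect to require care — and the only nontrivial point in an otherwise routine manipulation — is the treatment of this top term. Its coefficient is $\widetilde{\alpha}^{acc}_{N+1}(t) = 2^{-(N+1)} \binom{N}{N} = 2^{-(N+1)}$, so the entire $j = N+1$ contribution is bounded in absolute value by $2^{-(N+1)}/\sqrt{N+1}$. With $N = N(t) = \left[\frac{t}{2}\right]$ this is $O(2^{-t/2}) = O(e^{-\omega t})$ for $\omega = \tfrac{1}{2}\ln 2$, hence of the same exponential order as the error already present in \eqref{eq:acc}. Consequently, truncating the upper limit from $N+1$ to $N$ alters $\widetilde{Z}_{N(t)}(t)$ only by a term absorbed into the error, and the estimate $Z(t) = \widetilde{Z}_{N(t)}(t) + O(e^{-\omega t})$ persists with the single-section coefficients \eqref{eq:acc-co}, completing the reformulation.
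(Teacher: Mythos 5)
Your proof is correct and follows essentially the same route as the paper's: both arguments reindex the inner sum and interchange the order of summation over the finite triangle $0 \le n \le N$, $0 \le k \le n$, which is exactly the "horizontal versus vertical" summation the paper describes. The one place you go beyond the paper is in explicitly tracking the boundary term $j = N+1$ (with coefficient $2^{-(N+1)}\binom{N}{N}$) that the interchange naturally produces and that the paper's stated range $1 \le k \le N(t)$ silently drops; your observation that this term is $O\left(2^{-t/2}\right)$ and hence absorbable into the exponentially small error is the right way to reconcile the two, and is a genuine (if minor) improvement in rigor over the paper's own argument.
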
 

\begin{proof} Denote by 
\begin{equation} 
\beta_{n,k}(t):=  \beta^0_{n,k} \frac{cos( \theta(t)- ln(k+1) t)}{\sqrt{k+1}},
\end{equation} 
where 
\begin{equation} 
\beta^0_{n,k}:=\frac{1}{2^{n+1}}\binom{n}{k} 
\end{equation}

The accelerated formula is given as the sum of all the coefficients $\beta_{n,k}(t)$ within the triangle $0 \leq n \leq N(t) $ and $0 \leq k \leq n$. In particular, summing first along the $k$ indices leads to the definition of  
\begin{equation} 
A(n,t) = \sum_{k=0}^n \beta_{n,k}(t).
\end{equation} 
The accelerated formula \eqref{eq:acc} is thus given by the summation of $A(n,t)$ for $0 \leq n \leq N(t)$. Figure \ref{fig:f2.1} illustrates the triangle of coefficients $\beta_{n,k}(t)$ and their possible summation orders:   
\begin{figure}[ht!]
\centering
\includegraphics[scale=0.325]{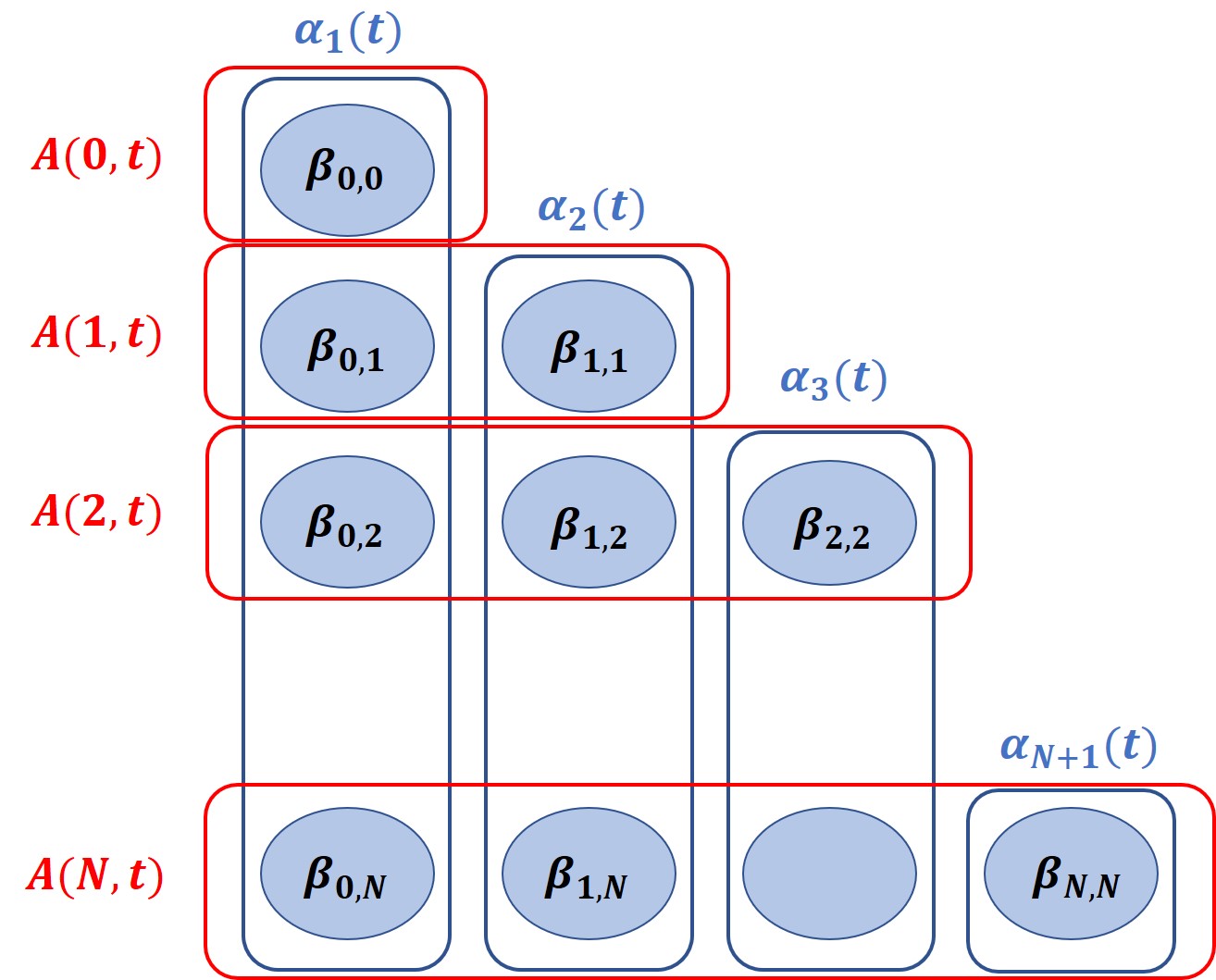}
\caption{\small The triangle of coefficients $\beta_{n,k}(t)$ with $A(n,t)$ arising from the horizontal order of summation (red) and $\widetilde{\alpha}_k(t)$ arising from the vertical order of summation (blue).}
\label{fig:f2.1}
\end{figure}

On the other hand, changing the order of summation to be first along $k-1 \leq n \leq N(t)$ for given $k$, and then along $1 \leq k \leq N(t)$ leads to the definition of $\widetilde{\alpha}_{k}(t)$ and the required formula.  
\end{proof} 

Note that Spira's section can be written as 
\begin{equation}
Z_{N(t)}(t) = \sum_{k=1}^{N(t)} \alpha^{step}_k(t) \frac{\cos( \theta(t)- \ln(k) t)}{\sqrt{k}},
\end{equation}
where the coefficients $\alpha^{step}_k(t)$ are given by the step function:
\begin{equation}
\label{eq:step}
\alpha^{step}_k(t) := \left \{ \begin{array}{cc} 1 & 1 \leq k \leq N(t) \\ 0 & \textrm{otherwise} \end{array} \right..
\end{equation}

Figure \ref{fig:f1.3} shows a comparison between the accelerated coefficients $ \widetilde{\alpha}^{acc}_k(t)$ (blue) and the step coefficients $\alpha^{step}_k(t)$ (orange) of Spira's section for $t=400$ and $k=1,...,400$:   

\begin{figure}[ht!]
\centering
\includegraphics[scale=0.45]{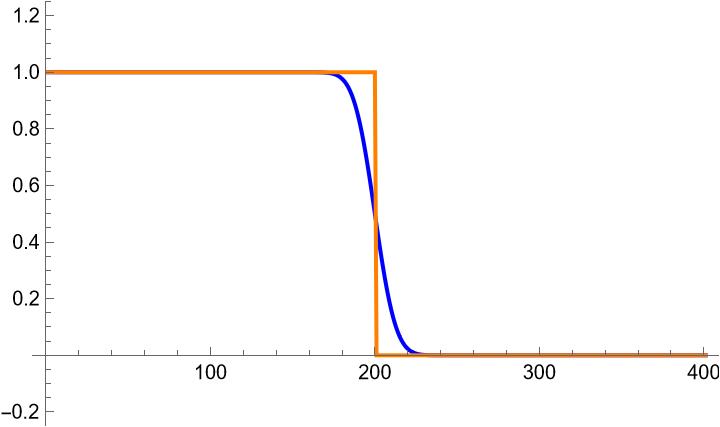}
\caption{\small The accelerated coefficients $ \widetilde{\alpha}^{acc}_k(t)$ (blue) and the step coefficients $\alpha^{step}_k(t)$ (orange) of Spira's section for $t=400$ and $k=1,...,400$.}
\label{fig:f1.3}
\end{figure}

We thus have:

\begin{cor}
\label{cor:2}
The accelerated section $\widetilde{Z}_{N(t)}(t)$ is related to Spira's section $Z_{N(t)}(t)$ via a smoothing of the step coefficients $\alpha^{step}_k(t)$ to obtain $\widetilde{\alpha}^{acc}_k(t)$. Moreover, 
\begin{equation} 
lim_{t \rightarrow \infty} \lvert \lvert \widetilde{\alpha}^{acc}_k(t) - \alpha^{step}_k(t) \rvert \rvert =0
\end{equation} 
in the $\ell_2$ norm.  
\end{cor}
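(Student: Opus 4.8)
The plan is to reduce the Corollary to the analysis of the single scalar deficit
\begin{equation}
\delta_k(t) := \alpha^{step}_k(t) - \widetilde{\alpha}^{acc}_k(t), \qquad 1 \le k \le N(t),
\end{equation}
and to control it by means of an exact combinatorial identity. First I would record the generating-function identity $\sum_{n \ge j}\binom{n}{j} x^n = x^{j}/(1-x)^{j+1}$ for $\lvert x \rvert < 1$; specializing to $x = \tfrac12$ and $j = k-1$ yields $\sum_{n=k-1}^{\infty}\tfrac{1}{2^{n+1}}\binom{n}{k-1} = \tfrac12 \cdot 2 = 1$. Comparing this with \eqref{eq:acc-co}, the \emph{completed} accelerated coefficient equals the step value $1$, so the deficit is exactly the truncation tail
\begin{equation}
\delta_k(t) = \sum_{n=N(t)+1}^{\infty}\frac{1}{2^{n+1}}\binom{n}{k-1}.
\end{equation}
Reading $\tfrac{1}{2^{n+1}}\binom{n}{k-1}$ as the negative-binomial weight for the waiting time of the $k$-th head of a fair coin gives the clean reformulation $\widetilde{\alpha}^{acc}_k(t) = \mathbb{P}\big(\mathrm{Bin}(N(t)+1,\tfrac12) \ge k\big)$ and $\delta_k(t) = \mathbb{P}\big(\mathrm{Bin}(N(t)+1,\tfrac12) \le k-1\big)$. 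This identity already yields the first assertion: $\widetilde{\alpha}^{acc}_{\bullet}(t)$ is the monotone, sigmoidal binomial survival profile, i.e.\ precisely a smoothing of the sharp indicator \eqref{eq:step}.

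For the convergence assertion I would first settle the coordinatewise statement. For fixed $k$ the mean $\mu(t) := (N(t)+1)/2 \to \infty$, so the one-sided Hoeffding bound $\delta_k(t) \le \exp\big(-2\,(\mu(t)-(k-1))^2/(N(t)+1)\big)$ forces $\delta_k(t) \to 0$; hence every coordinate of the difference tends to $0$. To promote this to the $\ell_2$ statement I would split the index set at the mean, writing $\lVert \delta(t)\rVert_2^2 = \sum_{k \le \mu(t)} \delta_k(t)^2 + \sum_{k > \mu(t)} \delta_k(t)^2$, and estimate the two pieces separately: on the lower piece the Hoeffding bound turns the sum into a Gaussian-type series handled by comparison with $\int_0^{\infty} e^{-cx^2}\,dx$, while the upper piece is treated through the complementary tail $\delta_k(t) = 1 - \mathbb{P}(\mathrm{Bin}(N(t)+1,\tfrac12) \ge k)$.

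The hard part will be exactly the upper and transition region $k \gtrsim \mu(t)$. There the survival profile has already dropped well below $1$, the binomial CDF $\delta_k(t)$ is itself of order $1$, and the crude concentration estimates provide no decay, so an unweighted term-by-term bound over $k \in (\mu(t), N(t)]$ is useless. The decisive step is therefore a sharp evaluation of $\sum_{k > \mu(t)} \delta_k(t)^2$ in the \emph{relevant} norm, for which I would combine the exact identity above with summation by parts against the monotone profile $\widetilde{\alpha}^{acc}_{\bullet}(t)$ and a local central-limit approximation of the binomial near its mean; controlling this transition window — and, if necessary, pinning down the correct weighting under which the profile genuinely converges to the step — is where the real content of the Corollary lies.
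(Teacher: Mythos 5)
Your reduction is the right one, and the probabilistic reformulation is correct and clean: since $\sum_{n\ge k-1}2^{-(n+1)}\binom{n}{k-1}=1$, the deficit is exactly $\delta_k(t)=1-\widetilde{\alpha}^{acc}_k(t)=\mathbb{P}\bigl(\mathrm{Bin}(N(t)+1,\tfrac12)\le k-1\bigr)$ for $1\le k\le N(t)$, and the "smoothing" half of the Corollary is indeed just the observation that $\widetilde{\alpha}^{acc}_{\bullet}(t)$ is the binomial survival profile. For comparison, the paper offers no argument at all for this Corollary (it is stated as an immediate consequence of Proposition \ref{prop:acc}), so your analysis is already more substantive than the source.

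However, the difficulty you flag in your last paragraph is not a technical obstacle to be finessed by summation by parts or a local CLT --- it is a genuine obstruction, and it shows the $\ell_2$ claim is false as stated. The profile $\widetilde{\alpha}^{acc}_k(t)=\mathbb{P}(\mathrm{Bin}(N+1,\tfrac12)\ge k)$ drops from $1$ to $0$ in a window of width $O(\sqrt{N})$ centered at $k\approx (N+1)/2$, whereas $\alpha^{step}_k(t)$ drops at $k=N$. Hence for every $k$ with $(N+1)/2+C\sqrt{N\log N}<k\le N$ one has $\delta_k(t)\ge 1-o(1)$, and there are $(\tfrac12-o(1))N$ such indices, so $\lVert\delta(t)\rVert_2^2\ge(\tfrac12-o(1))N(t)\to\infty$. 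Even your "easy" lower piece does not close: the Hoeffding exponent is $2(\mu-(k-1))^2/(N+1)$, so the Gaussian comparison integral is $\int_0^\infty e^{-cx^2/(N+1)}\,dx\asymp\sqrt{N}$, not $O(1)$; the transition window alone contributes $\Theta(\sqrt{N})$ to the squared norm. The only true statement in the neighborhood of the Corollary is the coordinatewise one you prove (for each fixed $k$, $\delta_k(t)\to 0$), or convergence restricted to $k\le(1-\varepsilon)N(t)/2$; no unweighted $\ell_2$ version over $1\le k\le N(t)$ can hold. You should say this outright rather than defer it as the "real content": the correct conclusion of your own setup is that the Corollary requires reformulation, not a sharper estimate.
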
 

\section{Discussion and Conclusions} 

Sections of the Hardy $Z$-function are known to approximate the Hardy $Z$-function in two different ways: (a) The classical Hardy-Littlewood AFE given by $2Z_{\widetilde{N}(t)}(t)$ in \eqref{eq:HL} and (b) Spira's approximation given by $Z_{N(t)}(t)$ in \eqref{eq:Spira}. Although the AFE is more efficient and theoretically expected to be superior, Spira observed through his numerical experimentations that in practice his approximation consistently satisfies the RH for any verified $t \in \mathbb{R}$, in the sense that all of its zeros are real. This is in striking contrast to the properties of the AFE, which requires the further evaluation of the Riemann-Siegel formula for this purpose. 

In this work we presented theoretical justification for this mysterious observation, based on our new techniques of the asymptotic analysis of series acceleration, developed in \cite{J} where we introduced a new accelerated approximation satisfying 
\begin{equation} 
Z(t) = \widetilde{Z}_{N(t)}(t) + O (e^{-\omega t}), 
\end{equation} 
where $\omega>0$ is a certain positive constant. This approximation achieves the superior accuracy expected to be attained by the Hardy-Littlewood AFE coupled with the Riemann-Siegel formula when evaluated at the optimal order, according to the Berry and Keating conjecture. In general, for any sequence $\alpha_k = ( \alpha_1,\alpha_2,...) \in \ell_2$ set 
\begin{equation} 
Z(t ; \alpha_k) := \sum_{k=1}^{\infty} \alpha_k \frac{cos(\theta(t)-ln(k)t)}{\sqrt{k}}.
\end{equation}
In a unified manner, we have shown in Proposition \ref{prop:acc} that both Spira's sections and our accelerated sections can be expressed as 
elements in this space 
\begin{equation} 
\begin{array}{ccc} 
\widetilde{Z}_{N(t)}(t) = Z(t ; \widetilde{\alpha}_k^{acc}(t)) & ; Z_{N(t)}(t) = Z(t; \alpha_k^{step}(t)),
\end{array} 
\end{equation} 
where $\alpha_k^{step}(t)$ is defined in \eqref{eq:step} and the definition of $\alpha_k^{acc}(t)$ is defined in \eqref{eq:acc-co}. Furthermore, Corollary \ref{cor:2} shows that  
\begin{equation} 
lim_{t \rightarrow \infty} \lvert \lvert \widetilde{\alpha}^{acc}_k(t) - \alpha^{step}_k(t) \rvert \rvert =0.
\end{equation} 
This essentially implies that Spira's section $Z_{N(t)}(t)$ asymptotically coincides with our accelerated sections $\widetilde{Z}_{N(t)}(t)$ as $ t \rightarrow \infty$. 

In conclusion, our results provide the required theoretical justification to the numerical phenomena observed by Spira. Indeed, the fact that the difference between the accelerated sections $\widetilde{Z}_{N(t)}$ and the Hardy $Z$-function $Z(t)$ itself is imperceptible, 
coupled with the convergence of $Z_{N(t)}(t)$ to $\widetilde{Z}_{N(t)}$ at large \(t\), establishes that Spira's RH for sections is essentially equivalent to RH for $Z(t)$ itself. Moreover, our results further motivate the definition of the new parametrized space of sections $Z(t; \alpha_k)$ and the study of the properties of their zeros with respect to variation of the parameters $\alpha_k \in \ell_2$,  suggesting a promising direction for further exploration.

\section*{Declarations}

\subsection*{Funding}
No funding was received to assist with the preparation of this manuscript.

\subsection*{Conflicts of Interest/Competing Interests}
The authors declare that they have no conflict of interest or competing interests relevant to the content of this article.

\subsection*{Data Availability}
The authors declare that the data supporting the findings of this study are available within the paper or from the corresponding author upon reasonable request.

\bibliographystyle{plain} % We choose the "plain" reference style

\begin{thebibliography}{99}

\bibitem{BeKe}
Berry, M. V., \& Keating, J. P. (1999). The Riemann Zeros and Eigenvalue Asymptotics. \textit{SIAM Review}, 41(2), 236--266. Society for Industrial and Applied Mathematics.

\bibitem{E}
Edwards, H. M. (1974). \textit{Riemann's Zeta Function}. Academic Press.

\bibitem{Gourdon2004}
Gourdon, X. (2004). The \(10^{13}\) first zeros of the Riemann Zeta function, and zeros computation at very large height. Available online.

\bibitem{HL}
Hardy, G. H., \& Littlewood, J. E. (1921). The zeros of Riemann's zeta function on the critical line. \textit{Math. Z.}, 10, 283--317.

\bibitem{HL2}
Hardy, G. H., \& Littlewood, J. E. (1923). The Approximate Functional Equation in the Theory of the Zeta Function, with an Application to the Divisor-Problems of Dirichlet and Piltz. \textit{Proceedings of the London Mathematical Society}, s2-21(1), 39--74.

\bibitem{HL3}
Hardy, G. H., \& Littlewood, J. E. (1929). The approximate functional equations for $\zeta(s)$ and $\zeta^2(s)$. \textit{Proceedings of the London Mathematical Society, Volume s2-29}(1), 81--97.

\bibitem{HL4}
Hardy, G. H., \& Littlewood, J. E. (1918). Contributions to the Theory of the Riemann Zeta-Function and the Theory of the Distribution of Primes. \textit{Acta Mathematica}, 41, 119--196.

\bibitem{I}
Ivić, A. (2012). \textit{The Theory of Hardy's Z-Function}. Cambridge: Cambridge University Press.

\bibitem{I2}
Ivić, A. (1985). \textit{The Riemann Zeta-Function: Theory and Applications}. Dover Publications.

\bibitem{J}
Jerby, Y. (2021). An approximate functional equation for the Riemann zeta function with exponentially decaying error. \textit{Journal of Approximation Theory}, 265, https://doi.org/10.1016/j.jat.2021.105551.

\bibitem{Le}
Lehman, R. S. (1970). On the Distribution of Zeros of the Riemann Zeta Function. \textit{Proc. London Math. Soc.}, 20, 303--320.

\bibitem{Odlyzko1992}
Odlyzko, A. M. (1992). The \(10^{20}\)-th zero of the Riemann zeta function and 175 million of its neighbors. Unpublished book.

\bibitem{OdlyzkoSchoenhage1988}
Odlyzko, A. M., \& Schönhage, A. (1988). Fast algorithms for multiple evaluations of the Riemann zeta function. \textit{Trans. Amer. Math. Soc.}, 309(2), 797--809. doi:10.2307/2000939.

\bibitem{Od}
Odlyzko, A. M. (1987). On the distribution of spacings between zeros of the zeta function. \textit{Mathematics of Computation}, 48(177), 273--308. doi:10.2307/2007890.

\bibitem{PT}
Platt, D., \& Trudgian, T. (2021). The Riemann hypothesis is true up to $3\cdot10^{12}$. \textit{Bulletin of the London Mathematical Society, Wiley}. doi:10.1112/blms.12460.

\bibitem{Ro}
Rosser, J. B., Yohe, J. B., \& Schoenfeld, L. (1969). Rigorous Computation and the Zeros of the Riemann Zeta-Function. In \textit{Cong. Proc. Int. Fed. Information Process.} (pp. 70--76). Washington, DC: Spartan.

\bibitem{Si}
Siegel, C. L. (1932). Über Riemanns Nachlaß zur analytischen Zahlentheorie. \textit{Quellen Studien zur Geschichte der Math. Astron. und Phys. Abt. B: Studien}, 2, 45--80. Also in Gesammelte Abhandlungen, Vol. 1. Berlin: Springer-Verlag, 1966.

\bibitem{SP1}
Spira, R. (1967). Zeros of approximate functional equations. \textit{Mathematics of Computations}, 21(97), 41--48.

\bibitem{SP2}
Spira, R. (1966). Zeros of sections of the zeta function I. \textit{Mathematics of Computations}, 18, 542--550.

\bibitem{Titchmarsh1935}
Titchmarsh, E. C. (1935). The Zeros of the Riemann Zeta-Function. \textit{Proceedings of the Royal Society of London. Series A, Mathematical and Physical Sciences}, 151(873), 234--255. doi:10.1098/rspa.1935.0146.

\bibitem{Turing1953}
Turing, A. M. (1953). Some calculations of the Riemann zeta-function. \textit{Proceedings of the London Mathematical Society, Third Series}, 3, 99--117. doi:10.1112/plms/s3-3.1.99.

\end{thebibliography}

\end{document}